\newcommand{\RR}{\mathbb{R}}
\newcommand{\R}{\mathbb{R}}
\newcommand{\Q}{\mathbb{Q}}
\newcommand{\T}{^{\mathrm{T}}}
\newcommand{\ux}{\underline{x}}
\newcommand{\ox}{\overline{x}}
\newcommand{\A}{\mathcal{A}}
\newcommand{\Z}{Z}
\newcommand{\MH}[1]{{\textcolor{blue}{#1}}}
\DeclareMathOperator{\rank}{rank}
\newcommand{\linclen}{q}
\newcommand{\rankQ}{r}
\newcommand{\diag}{\text{diag}}
\newcommand{\ah}{H}
\newtheorem{observation}{Observation}
\newtheorem{theorem}{Theorem}
\begin{document}

\title{A new polynomially solvable class of quadratic optimization problems with box constraints\thanks{This work was supported by Czech Science Foundation (first author: project 18-04735S, second author: project 19-02773S, third author: project 17-13086S).}
}


\author{Milan Hlad\'{\i}k\footnote{
Department of Applied Mathematics, Faculty of Mathematics and Physics, Charles University, Malo\-stransk\'e n\'am. 25, 118 00 Prague, Czech Republic, 
\texttt{hladik@kam.mff.cuni.cz}}
 \and
        Michal \v{C}ern\'{y}\footnote{Department of Econometrics, Faculty of Informatics and Statistics, University of Economics, Prague, W. Churchill's Sq. 4, 130 67 Prague, Czech Republic, 
\texttt{cernym@vse.cz}}
\and
        Miroslav Rada\footnote{Department of Financial Accounting and Auditing, Faculty of Finance and Accounting; Department of Econometrics, Faculty of Informatics and Statistics, University of Economics, Prague, W. Churchill's Sq. 4, 130 67 Prague, Czech Republic, 
\texttt{miroslav.rada@vse.cz}}         
}



\maketitle

\begin{abstract}
We consider the quadratic optimization problem 
$\max_{x \in C}\ x\T Q x + \linclen\T x$, where $C\subseteq\RR^n$ is a box and $\rankQ \coloneqq \rank(Q)$ is assumed to be $\mathcal{O}(1)$ (i.e., fixed). We show that this case can be solved in polynomial time for an arbitrary $Q$ and $\linclen$. The idea is based on a reduction of the problem to enumeration of faces of a certain zonotope in dimension $O(\rankQ)$. This paper generalizes previous results where $Q$ had been assumed to be positive semidefinite and no linear term was allowed in the objective function. Positive definiteness was a strong restriction and it is now relaxed.  
Generally, the problem is NP-hard; this paper describes a new polynomially solvable class of instances, larger than those known previously.
\end{abstract}

\section{Introduction}
\paragraph{The problem.}
We consider the quadratic optimization problem 
\begin{equation}
    \max_{x\in \RR^n}\ x\T Q x + \linclen\T x
\quad \text{s.t.}
\quad 
\ux \leq x \leq \ox, 
\label{eq:basic}
\end{equation}
where $Q \in \RR^{n \times n}$ is an arbitrary matrix,
$\linclen, \ux, \ox \in \RR^n$ and 
$\ux \leq \ox$. 
We will also use the symbols $f(x) \coloneqq x\T Q x + \linclen\T x$ and $C \coloneqq \{x\in\RR^n\ |\ 
\ux \leq x \leq \ox\}$.

 Recall that \cref{eq:basic} is solvable in polynomial time if $-Q$ is positive semidefinite (psd); otherwise the problem is NP-hard. It is known that even
a single negative eigenvalue of $-Q$ suffices for NP-hardness 
\cite{ParVav1991,vavasis:1991:Nonlinearoptimizationcomplexity}.

\paragraph{The rank-deficient case and our contribution.}
We show that \cref{eq:basic} can be solved in polynomial time
if $Q$ is rank-deficient. In the entire text we assume that
\begin{equation}
\rankQ := \text{rank}(Q) = \mathcal{O}(1). 
\label{eq:rankdef}
\end{equation}
 Our method works for an arbitrary $\rankQ$; but its complexity can be proven to be polynomial only under the assumption \cref{eq:rankdef}. 
 
 Generally, our method is 
``the faster, the lower $\rankQ$ is''. This is also why
we will discuss in \cref{sLowRankAch} 
whether $Q$ can be replaced by another
matrix $Q'$ such that $\rank(Q') < \rank(Q)$
and 
$x\T Q x = x\T Q' x$ for all $x$. (This is a problem of finding 
a good representation of a given quadratic form. We point out that, in our context, representation of a quadratic form by a symmetric matrix---as it is usual in literature---\emph{need not} be a good idea in general.)

\paragraph{Related work.} Since the class of problems \cref{eq:basic} is NP-hard, a search for polynomially solvable sub-classes, as large as possible, is relevant. This paper generalizes the work of Allemand et al. and Ferrez et al.~\cite{allemand:2001:polynomialcaseunconstrained,ferrez:2005:Solvingfixedrank} who consider
the problem \cref{eq:basic}
with additional assumptions of $\linclen = 0$ and $Q$ psd, while our method works with an arbitrary $Q$ and $\linclen$. 

To be more precise, papers \cite{allemand:2001:polynomialcaseunconstrained,ferrez:2005:Solvingfixedrank} also consider binary variables 
$x \in \{0,1\}^n$ instead of continuous. However, since they work with the psd case only, the constraints $x \in \{0,1\}^n$ are
equivalent to $0 \leq x \leq e$ with continuous variables $x$, where $e = (1, \dots, 1)\T$. Here, it is worth noting that our method contributes to the psd binary case, too, since it admits a linear term in the objective function.

\section{The method}\label{sAlg}
Since the feasible region is compact, the problem \cref{eq:basic} attains a maximum. A maximizer is a stationary point in the following sense. Let $F$ be a face of $C$, let $x^*\in F$, and let $H(F)$ be the affine hull of $F$. Then $x^*$ is called \emph{a stationary point with respect to face $F$} if it is a stationary point with respect to the affine space $H(F)$ (i.e., the partial derivatives of the objective function at $x^*$ vanish in the direction of $H(F)$).
We say that a feasible point $x^*$ is \emph{a stationary point} if it is a stationary point with respect to a certain face~$F$.

We recall basic properties of stationary points.
By definition, each vertex of $C$ is a stationary point.
If $F$ and $F'$ are faces of $C$ and $x^*\in F\subseteq F'$ is a stationary point with respect to $F'$, then $x^*$ is a stationary point with respect to~$F$. Therefore, we can restrict to the minimal (w.r.t.\ inclusion) face containing~$x^*$; point $x^*$ then lies in the relative interior of this face.
Notice that the converse direction does not hold in general: If $x^*\in F\subseteq F'$ is a stationary point with respect to $F$, then $x^*$ need not be a stationary point with respect to~$F'$. 
From the KKT necessary condition, we have:

\begin{observation}\label{obsOptStat}
If $x^*\in C$ is a local optimum of \cref{eq:basic}, then it is a stationary point.
\end{observation}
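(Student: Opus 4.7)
The plan is to take $F$ to be the minimal (w.r.t.\ inclusion) face of $C$ containing $x^*$ and show directly that $x^*$ is a stationary point with respect to this $F$. First, I would describe $F$ explicitly using the box structure: let $A \coloneqq \{i : x^*_i = \ux_i \text{ or } x^*_i = \ox_i\}$ be the set of indices for which a box constraint is active at $x^*$. Then the minimal face containing $x^*$ is $F = \{x \in C : x_i = x^*_i \text{ for } i \in A\}$, and its affine hull is $H(F) = \{x \in \RR^n : x_i = x^*_i \text{ for } i \in A\}$. By construction $x^*$ lies in the relative interior of $F$.

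Next, I would exploit the local-maximum property along directions tangent to $H(F)$. Let $d \in \RR^n$ be any vector with $d_i = 0$ for $i \in A$; these $d$ span the linear subspace parallel to $H(F)$. Because $x^*$ is in the relative interior of $F$, for all sufficiently small $|t|$ we have $x^* + t d \in F \subseteq C$. Consider $\phi(t) \coloneqq f(x^* + t d)$; since $x^*$ is a local maximum of $f$ on $C$, the point $t = 0$ is a local maximum of $\phi$, hence $\phi'(0) = \nabla f(x^*)\T d = 0$. As $d$ ranges over all tangent directions of $H(F)$, this shows that the directional derivatives of $f$ at $x^*$ vanish in every direction of $H(F)$, which is exactly the stationarity condition. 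Thus $x^*$ is a stationary point with respect to $F$, and hence a stationary point in the sense of the definition.

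There is really no major obstacle: the only subtlety is the use of \emph{both} $+d$ and $-d$, which is valid precisely because $x^*$ sits in the relative interior of $F$ and therefore can move a small distance in either direction without leaving $C$. This two-sided room is what upgrades the one-sided inequality $\phi(t) \le \phi(0)$ to the equality $\phi'(0) = 0$, yielding the KKT-type stationarity conclusion advertised in the statement.
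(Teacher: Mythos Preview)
Your argument is correct and is essentially the same as the paper's: the paper simply states that the observation follows from the KKT necessary conditions, while you spell out that first-order argument explicitly for the box-constrained setting by restricting to the minimal face and using two-sided feasibility of tangent directions. There is nothing to add.
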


The objective function $f(x)=x\T Q x + \linclen\T x$ is quadratic. Hence, the set of stationary points of $f(x)$ in every affine subspace is again an affine subspace. In particular, the stationary points of $f(x)$ with respect to a face $F$ of $C$ form a polyhedron, say~$P$. 
Since the partial derivatives are vanishing on the subspace generated by~$F$, we immediately have:

\begin{observation}\label{obsStatConst}
For each $x,y\in P$ we have $f(x)=f(y)$.
\end{observation}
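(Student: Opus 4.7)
The plan is to reduce the claim to a one-dimensional fact about a quadratic polynomial along the segment from $x$ to $y$. I take arbitrary $x,y\in P$ and set $d \coloneqq y-x$. Because $x,y\in F\subseteq H(F)$, the vector $d$ lies in the linear subspace parallel to $H(F)$; by the definition of stationarity with respect to $F$, the directional derivative of $f$ along any such direction vanishes at every point of $P$. In particular, both $\nabla f(x)\T d = 0$ and $\nabla f(y)\T d = 0$.

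Next I consider the univariate restriction $\phi(t) \coloneqq f(x+td)$ on $t\in[0,1]$. Since $f$ is quadratic, $\phi$ is a polynomial of degree at most two, so its derivative $\phi'(t) = \nabla f(x+td)\T d$ is affine in $t$. The two stationarity identities above read $\phi'(0)=0$ and $\phi'(1)=0$. Assuming $x\neq y$ (otherwise there is nothing to prove), an affine function that vanishes at two distinct arguments must be identically zero, so $\phi$ is constant on $[0,1]$ and $f(x) = \phi(0) = \phi(1) = f(y)$.

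I do not expect any genuine obstacle here; the only detail requiring care is the already-noted point that $y-x$ is a legitimate stationary direction at both endpoints, which is immediate from $x,y\in H(F)$ and the definition of stationarity with respect to $F$. As an alternative route, one can expand $f(y)-f(x) = \nabla f(x)\T d + d\T Q d$ directly, together with the symmetric identity $f(x)-f(y) = -\nabla f(y)\T d + d\T Q d$ obtained by swapping the roles of the two points; summing these two expressions and invoking both stationarity conditions forces $d\T Q d = 0$, after which either identity collapses to $f(x)=f(y)$. The argument never uses convexity of $Q$ nor any property of $C$ beyond the fact that $F$ is a face, so it extends verbatim to the general (possibly indefinite) setting considered in the paper.
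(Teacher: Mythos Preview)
Your proof is correct and takes essentially the same approach as the paper, which treats the observation as immediate from the vanishing of the directional derivatives along $H(F)$. Your one-dimensional restriction (and the alternative algebraic expansion) simply make this explicit.
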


This allows us to use the following basic method: Examine faces one by one, for every face, check whether there is a stationary point with respect to that face, and evaluate the objective function in such a (representative) point. There are $3^n$ nonempty faces, which makes such a method exponential in $n$. 

Note that this approach is conceptually analogous to KKT conditions. Since the objective function is quadratic and all constraints in \cref{eq:basic} are affine functions, the KKT system can be decomposed by complementarity constraints to some linear subsystems. In every such subsystem, complementarity constraints cause that some variables are fixed at their lower bounds,
some variables are fixed at their upper bounds and some remain free inside their bounds. Also, every such subsystem corresponds to one face of $C$.

To overcome the exponentiality in $n$, we transform problem \cref{eq:basic} to a form where the feasible region is a special polytope---a zonotope---in dimension $2r$ (recall that $r$ is rank of $Q$).
The transformation reduces the number of faces of the feasible region to $O(n^{2r-1})$. With the ``simpler'' feasible region, we perform the above sketched steps. Our method is described in the rest of this section. In \crefrange{sec:hiding}{sec:handling}, we address the following steps in more detail:
\begin{enumerate}
    \item hiding the linear term $\linclen\T x$ from the objective function,
    \item reducing the dimension of the problem,
    \item enumerating faces of the feasible region in the reduced dimension, and
    \item finding stationary points with respect to each face and examining them.
\end{enumerate}
In \cref{sec:algorithm}, the algorithm is summarized and its correctness is proved. 

\subsection{Hiding the linear term\label{sec:hiding}}
The dimension-reduction step described in \cref{sec:reducing} works only if the objective function $f(x)$ contains no linear term. 
We show that an instance of the problem \cref{eq:basic}, can be transformed to the case with $\linclen=0$, i.e.
\begin{equation}
\max\ x\T Q x \ \ \text{s.t.}\ \ x \in C = \{\xi \ |\ \ux \leq \xi \leq \ox\}.
\label{eq:myproblemtwo}
\end{equation}
The transformation is performed at a moderate cost: the number of variables increases by $1$ and the rank of the quadratic form is increased by at most $2$.

We introduce an auxiliary variable $w \in \RR$. We get a new 
$(n+1)$-vector of variables
$x' = \binom{x}{w}$. We set 
\begin{equation}
    \label{eq:hiding:symbols}
Q' := \left(
\begin{matrix}
Q & \frac{1}{2}\linclen \\
\frac{1}{2}\linclen\T & 0
\end{matrix}
\right), \quad
\ux' = \binom{\ux}{1},
\quad
\ox' = \binom{\ox}{1}.
\end{equation}
Now \cref{eq:myproblemtwo} can be rewritten as
\begin{equation}
\max_{x'\in\RR^{n+1}}\ (x')\T Q' x' \ \ \text{s.t.}\ \ \ux' \leq x' \leq \ox',
\label{eq:modified}
\end{equation}
since the constraints imply $w=1$ and therefore 
$(x')\T Q' x' = x\T Q x + w \linclen\T x  = x\T Q x + \linclen\T x$.

Observe that
\begin{equation}
\rank(Q') \leq \rank(Q) + 2,
\label{eq:rankQ}
\end{equation}
so the transformation does not violate assumption 
\cref{eq:rankdef}.
From now on we will, without loss of generality, assume $\linclen = 0$.

\subsection{Reducing the dimension\label{sec:reducing}}

We use an approach conceptually similar to \cite{allemand:2001:polynomialcaseunconstrained}.
We perform \MH{a} rank factorization of $Q$ and construct a pair of matrices
$U, V \in \RR^{\rankQ \times n}$ such that $Q = U\T V$. An adaptation of Gaussian elimination can be used for this purpose; see e.g.\ \cite{stewart:1998:MatrixAlgorithmsVolume}.
Then we
introduce matrices $G = \binom{U}{V}$ (i.e.,~$G$ results from sticking $V$ under $U$) and 
\begin{equation}
W = \frac{1}{2}\left(
\begin{matrix}
0_{\rankQ \times \rankQ} && I_{\rankQ \times \rankQ} \\
I_{\rankQ \times \rankQ} && 0_{\rankQ \times \rankQ}
\end{matrix}
\right).
\label{eq:W}
\end{equation}
Using substitutions $u = U\T x$,
$v = V\T x$ and stacking the new variables $u,v$ into a
vector $y = \binom{u}{v}$, we get
\begin{align}
\max_{x\in C}\ x\T Q x 
&= 
\max_{x\in C}\ x\T U\T V x
\nonumber
\\ & =
\max_{\substack{x\in \RR^n \\ u,v\in\RR^{\rankQ}}}
\{u\T v\ |\ u = U x,\ v = V x,\ x \in C\}
\nonumber
\\ & =
\max_{\substack{x\in \RR^n \\ u,v\in\RR^{\rankQ}}}
\left\{u\T v\ \Big|\ 
\binom{u}{v} = \binom{U}{V}x,\ x \in C\right\}
\nonumber
\\ & =
\max_{\substack{x\in\RR^n \\ y \in \RR^{2\rankQ}}}\{y\T W y\ |\ 
y = Gx,\ x \in C\}
\label{eq:projection:hypercube}
\\ &
 =
\max_{y \in \RR^{2\rankQ}}\{y\T W y\ |\ 
y \in \Z(G,C)\},
\label{eq:reduced}
\end{align}
where
\begin{equation}\Z(G,C)\coloneqq \{y \in \R^{2r}\ |\ y=Gx,\ x\in C\}. \label{eq:zono}\end{equation} 
Notice that $\Z(G,C)$ is the image of the box $C$ under the linear mapping $x \mapsto Gx$. 
It represents a special polytope, the so called \emph{zonotope}; see e.g.\ \cite[chapter 7]{ziegler:2012:LecturesPolytopes}.

Note that the feasible region in \cref{eq:projection:hypercube} is of dimension $n+2r$. The form \cref{eq:reduced} shows that the dimension is reduced to ``only'' $2r$.
This is for the price that the feasible region is no longer explicitly represented as an intersection of halfspaces, but its representation is in a sense implicit now: it is given by the bounds $\ux, \ox$ of the box $C$ and by the matrix $G$. Hence, classical algorithms that deal with halfspace representation cannot be used directly without falling back to the dimension $O(n)$.

\subsection{Enumerating faces\label{sec:enumerating}}
Here, we use the incremental algorithm by Edelsbrunner et al. (see \cite{edelsbrunner:1986:Constructingarrangementslines} or \cite[chapter 7]{edelsbrunner:2005:Algorithmscombinatorialgeometry}), which enumerates all faces (regions) of an arrangement of $m$ hyperplanes in $\R^d$ in time $O(m^d)$, provided that $d = O(1)$. Since there are arrangements with $O(m^d)$ faces (see \cite{buck:1943:Partitionspace} or \cite{zaslavsky:1975:FacingarrangementsFacecount}), the algorithm is asymptotically optimal.

The algorithm can be adapted for enumerating faces of a zonotope. There is the well-known duality between zonotopes and hyperplane arrangements: faces of a zonotope $\Z(G,C)$ are in one-to-one correspondence with regions of the central arrangement $\A(G)$ of hyperplanes $g_i\T y = 0$ for $i = 1, \ldots, n\}$, where $g_i$ is the $i$th column of $G$. 
Note that $\A(G)$ is in dimension $2r$.
More precisely, the face lattice of $\Z(G,C)$ is isomorphic to the lattice of regions of $\A(G)$.
This duality between zonotopes and central hyperplane arrangements has a nice geometric background: actually, $\A(G)$ forms a normal fan of $\Z(G,C)$. For a $k$-dimensional face $F$ of $\Z(G,C)$, there is a $(2r-k)$-dimensional region in $\A(G)$ that forms the normal cone of~$F$. This topic is discussed in more detail e.g.\ in \cite[chapter 7]{ziegler:2012:LecturesPolytopes} or \cite[chapter 1]{edelsbrunner:2005:Algorithmscombinatorialgeometry}.

\paragraph{Sign vectors, representation of regions of arrangements and faces of zonotopes}
The algorithm by Edelsbrunner et al.~\cite{edelsbrunner:1986:Constructingarrangementslines}
characterizes regions by \emph{sign vectors}. For a point $y \in \R^{2r}$, its sign vector $\sigma(y)\in\{-1,0,1\}^n$ says where $y$ is located with respect to each of $n$ hyperplanes of $\A(G)$. More precisely:
\begin{equation}
    \label{eq:sign:vector}
    \sigma_i(y) \coloneqq \left\{
        \begin{array}{l}
            \phantom{-}1 \quad \text{if } g_i\T y > 0,\\
            \phantom{-}0 \quad \text{if } g_i\T y = 0,\\
            -1 \quad \text{otherwise}.
        \end{array}
    \right.
\end{equation}
A region of $\A(G)$ is simply the set of points with same sign vector.

By duality of zonotopes and arrangements, a sign vector characterizing  a region of $\A(G)$ can also be used to characterize the corresponding face of $\Z(G,C)$. However, we will need a more explicit representation of faces of $\Z(G,C)$ in the next section. Since every face of the box $C$ is also a box, every face of $\Z(G,C)$ is again a zonotope. For a sign vector $\sigma$, define the box 
\begin{equation}
C_\sigma \coloneqq C \cup \ah(\sigma),\label{eq:C:sigma}
\end{equation}
where  
\begin{equation}    
    \ah(\sigma) \coloneqq
    \left\{ x \in \R^n\ \left|\ \begin{array}{l}
        x_i  = \ox_i \text{ if } \sigma_i =1, \\
        x_i  = \ux_i \text{ if }\sigma_i = -1,
    \end{array}
      \right.\ i = 1,\ldots, n\right\}
        \label{eq:S:sigma}\end{equation}
        is the affine subspace fixing some coordinates either to $\ux_i$ or $\ox_i$, according to the sign vector $\sigma$. 
Note that $\ah(\sigma)$ is actually the affine hull of $C_\sigma$.

Now, the zonotope $\Z(G, C_\sigma)$ is the face of $\Z(G,C)$ characterized by $\sigma$.
By the zonotope--arrangement duality, the face $\Z(G, C_\sigma)$ corresponds to the region of $\A(G)$ characterized by $\sigma$.

\paragraph{Taking advantage of central symmetry}
Actually, since $\A(G)$ is a central arrangement, its combinatorial complexity (measured by the number of regions) is of the same order as of an arrangement of $(n-1)$ hyperplanes in $\R^{2r-1}$. This can be verified by a simple argument: regions of $\A(G)$ can be classified in three groups with respect to the last sign of their sign vectors. To enumerate regions of each of these groups, it is sufficient to consider the intersection of $\A(G)$ with hyperplane $g_n\T y = -1$, then with hyperplane $g_n\T y=0$ and finally with $g_n\T y=1$. Each of these intersections forms an arrangement of $(n-1)$ hyperplanes in $\R^{2r-1}$. The crucial property is that each of these hyperplanes intersects \emph{all} regions of $\A(G)$ with the respective last sign.

Note also that if $\A(G)$ contains a region with sign vector $\sigma$, then there is also the region with sign vector $-\sigma$ by central symmetry, hence the regions with $\sigma_n=-1$ can be obtained from the regions with $\sigma_n = 1$.

Working with arrangements in $\R^{2r-1}$ allows us to claim to achieve the time complexity of the enumeration to be $O(n^{2r-1})$ for $r \geq 2$. 
If $r=1$, zonotope $\Z(G,C)$ is $2$-dimensional and its faces can be enumerated trivially.

\subsection{Handling one face\label{sec:handling}}
Say that a face $F = \Z(G,C_\sigma)$ is of interest. 
First, we find the affine hull $\ah(F)$ of~$F$, and then we describe the set of stationary points with respect to $F$. If this set is not empty, we compute a representative point of it.

We define of some auxiliary symbols first. Without loss of generality assume that $\sigma$ is such that its first $k$ signs are zero, the others are nonzero. Split the matrix $G \in \R^{2r\times n} = (G_A, G_b)$ to matrices $G_A \in \R^{2r\times k}$ and $G_b \in \R^{2r \times (n-k)}$, and similarly split the vectors $\ux = \binom{\ux_A}{\ux_b}$, $\ox = \binom{\ox_A}{\ox_b}$, $x = \binom{x_A}{x_b}$ and $\sigma =\binom{\sigma_A}{\sigma_b}$ to pairs of vectors with $k$ and $n-k$ elements. Say that $A \in \R^{\ell\times 2r}$ is a matrix such that its rows form a basis of the orthogonal complement of the column space of $G_A$.
Hence, we have $A G_A =0$, $\ell = 2r - \rank(G_A)$ and $\rank(A) = \ell$. 

Now, the preimage $C_\sigma$ of face $F = \Z(G,C_\sigma)$ of $Z(G,C)$ belongs to the affine subspace $\ah(\sigma)$. Hence $F \subseteq \ah(F) = \{y\ |\ y = Gx,\ x \in \ah(\sigma)\}$. Using the new notation, we can write 
\begin{align}
    \ah(F) &= \left\{y\ |\ y = G_A x_A + G_b x_b,\ \binom{x_A}{x_b} \in \ah(\sigma) \right\}\\
         &= \left\{y\ |\ Ay = 0 x_A + A G_b x_b,\ x_b= \textstyle\frac{1}{2}\displaystyle(\ux_b+\ox_b)+\textstyle\frac{1}{2}\displaystyle\diag(\sigma_b)(\ox_b-\ux_b)\right\}\\
        &= \left\{y\ |\ Ay = Ab\right\},
\end{align}
where $b \coloneqq \frac{1}{2} G_b \left((\ux_b+\ox_b)+\diag(\sigma_b)(\ox_b-\ux_b)\right)$.

By the Lagrange multiplier method applied to (\ref{eq:reduced}), we get that if $y \in F$ is a maximizer of \eqref{eq:reduced}, it has to satisfy the linear system
\begin{equation}
    A\T \lambda = 2W y,\ Ay = Ab,\ y \in F = Z(G, C_\sigma).\label{eq:stationary:initial}
\end{equation}
Note that since $F \subseteq \ah(F)$, constraints $Ay = Ab$ are redundant. After elimination of the $y$-variables we obtain system \cref{eq:stationary:point}. 
\begin{gather}
    A\T \lambda - 2W G_A x_A = 2W b,\ \ux_A \le x_A \le \ox_A.\label{eq:stationary:point}
\end{gather}

The system \cref{eq:stationary:point} can be solved as a linear program (LP) with variables $x_A \in \R^k,\ \lambda \in \R^\ell$.
From its solution $(\lambda^*,x^*_A)$, we can build the vector $x^* = \binom{x^*_A}{x_b}$, which is actually a stationary point of $f(x)$ with respect to~$C(\sigma)$. This is because $x^*$ is a preimage of the stationary point $y^* = Gx^*$ of $y\T W y$; if $y^*$ is a maximizer of \cref{eq:reduced}, then $x^*$ is a maximizer of \cref{eq:basic} and vice versa.

\subsection{The algorithm\label{sec:algorithm}}
The steps from the previous sections are summarized in \cref{alg:algorithm}. \cref{the:complexity,the:correctness} are about complexity and correctness of the algorithm.
The input data $(Q, \underline{x}, \overline{x})$ are assumed to be rational.

\begin{algorithm}[b]
    \begin{algorithmic}[1]
        \normalsize
        \REQUIRE $Q \in \Q^{n\times n},\ \linclen,\ux,\ox \in \Q^{n}$, $r \coloneqq \rank(Q)$
        \STATE $f^* \coloneqq -\infty,\ x^* \coloneqq 0$\label{alg:setting:prelim}
        \STATE \textbf{if} $\linclen \not = 0$ \textbf{then}\label{alg:hiding:condition}
        \STATE \qquad
        $Q \coloneqq \begin{pmatrix}Q&\frac{1}{2}\linclen\\\frac{1}{2}\linclen\T&0\end{pmatrix};\ \ux \coloneqq \binom{\ux}{1};\ \ox \coloneqq \binom{\ox}{1}$\hfill \COMMENT{{\small elimination of the linear term (Sec. \ref{sec:hiding})}}\label{alg:hiding}
        \STATE compute $U,V \in \Q^{r\times n}$ such that $U\T V = Q$;\ $G \coloneqq \binom{U}{V}$\hfill\COMMENT{{\small reduction of dimension (Sec.~\ref{sec:reducing})}}\label{alg:reducing}
        \STATE compute the set $S$ of sign vectors of regions of $\A(G)$ \hfill\COMMENT{{\small enumeration of faces (Sec. \ref{sec:enumerating}})}\label{alg:enumerating}
        \STATE\textbf{for each }$\sigma \in S$ \textbf{do}\hfill\COMMENT{{\small handling one face (Sec. \ref{sec:handling})}}\label{alg:handling:for:start}
        \STATE \qquad construct the affine hull $Ay = Ab$ of $F(\sigma)$\label{alg:handling:basis}
        \STATE \qquad solve LP \cref{eq:stationary:point} to obtain preimage $x$ of a stationary point w.r.t.\ face $F(\sigma)$\label{alg:handling:lp}
        \STATE \qquad\textbf{if} $x\T Q x > f^*$ \textbf{then}\label{alg:handling:update:condition}
        \STATE \qquad \qquad $x^* \coloneqq x$; $f^* \coloneqq x\T Q x$\label{alg:handling:update}
        \STATE\textbf{end for}\label{alg:handling:end}
        \RETURN $f^*,\ x^*$\label{alg:return}
    \end{algorithmic}
    \caption{The algorithm}
    \label{alg:algorithm}
\end{algorithm}

\begin{theorem} 
    \label{the:complexity}
If $\linclen=0$, \Cref{alg:algorithm} works in time $O(n^{2r-1}\cdot \text{lp}(Q,\ux,\ox))$, where $\text{lp}(Q,\ux,\ox)$ is time needed to solve a linear program with $O(n)$ variables, $O(n)$ constraints and data with bit-size polynomially bounded by the bit-sizes of $Q,\ux,\ox$. 
If $\linclen\not=0$, \Cref{alg:algorithm} works in time $O(n^{2r+1}\cdot \text{lp}(Q,q,\ux,\ox))$,
\end{theorem}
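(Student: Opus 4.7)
The plan is to walk through Algorithm~\ref{alg:algorithm} line by line, bound the cost of each step, and combine them to obtain the two complexity statements.

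For the easier case $\linclen=0$, line~\ref{alg:hiding} is skipped. Line~\ref{alg:reducing} (rank factorization $Q=U\T V$) is strongly polynomial by Gaussian elimination~\cite{stewart:1998:MatrixAlgorithmsVolume}, producing $U,V$ with bit-sizes polynomially bounded in that of $Q$. The dominant combinatorial step is line~\ref{alg:enumerating}: applying the Edelsbrunner et al.\ incremental algorithm to the central arrangement $\A(G)$, combined with the central-symmetry reduction to $n-1$ hyperplanes in $\R^{2r-1}$ from the end of Section~\ref{sec:enumerating}, enumerates $O(n^{2r-1})$ sign vectors in time $O(n^{2r-1})$ (the degenerate case $r=1$ is handled by direct $2$-dimensional enumeration, as mentioned there). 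The loop on lines~\ref{alg:handling:for:start}--\ref{alg:handling:end} iterates once per sign vector; inside it, assembling $A$ and $b$ from submatrices of $G$ is constant-dimensional linear algebra in $\R^{2r}$, and the dominant per-iteration cost is solving the LP~\eqref{eq:stationary:point} with $O(n)$ variables, $O(n)$ constraints and polynomial-bit-size data. Multiplying the iteration count by the per-iteration LP cost yields $O(n^{2r-1}\cdot\text{lp}(Q,\ux,\ox))$.

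For $\linclen\neq 0$, line~\ref{alg:hiding} augments the instance to dimension $n+1$, and eq.~\eqref{eq:rankQ} only guarantees $\rank(Q')\leq r+2$, so a naive reuse of the first case would give $O(n^{2r+3})$. To match the claimed $O(n^{2r+1})$ one must exploit that line~\ref{alg:reducing} only needs $U\T V$ to induce the correct quadratic form, not to literally equal the symmetric matrix assembled on line~\ref{alg:hiding}. The non-symmetric representative $\tilde Q=\begin{pmatrix} Q & \linclen\\ 0 & 0 \end{pmatrix}$ satisfies $(x,w)\T\tilde Q(x,w)=x\T Qx+w\linclen\T x$ and has $\rank(\tilde Q)\leq r+1$; this is the content of Section~\ref{sLowRankAch}. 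A rank factorization $U\T V=\tilde Q$ then produces $U,V\in\Q^{(r+1)\times(n+1)}$, and the $\linclen=0$ analysis applies with $(n,r)$ replaced by $(n+1,r+1)$, yielding $O((n+1)^{2(r+1)-1}\cdot\text{lp})=O(n^{2r+1}\cdot\text{lp}(Q,\linclen,\ux,\ox))$.

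The main obstacle is the bit-size accounting: to conclude that each LP call on line~\ref{alg:handling:lp} actually runs in time $\text{lp}(\cdot)$, one has to verify that the factors $U,V$, the split pair $(G_A,G_b)$, the complement basis $A$ and the vector $b$ all carry polynomially-bounded rational coefficients. These estimates follow from standard bounds on rational Gaussian elimination and from the cofactor/determinant description of an orthogonal-complement basis applied to submatrices of $G$, which already has polynomial-bit-size entries. A secondary subtlety is that $\A(G)$ need not be a simple arrangement when the columns of $G$ are linearly dependent, but the Edelsbrunner et al.\ algorithm still enumerates all sign vectors within the $O(n^{2r-1})$ bound, so the analysis goes through without further modification.
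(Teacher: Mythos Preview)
Your $\linclen=0$ analysis matches the paper's proof: enumerate $O(n^{2r-1})$ sign vectors via the Edelsbrunner et al.\ algorithm (with the central-symmetry reduction to dimension $2r-1$), solve one LP of size $O(n)$ per face, and note that all preprocessing steps are polynomial so that the LP data have polynomially bounded bit-size. The paper is terser about bit-sizes but makes the same points.

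For $\linclen\neq 0$ you go beyond the paper. The paper's own proof says only that ``the time complexity follows from the construction in step~\ref{alg:hiding} and from the previous case,'' but since \eqref{eq:rankQ} only guarantees $\rank(Q')\le r+2$, that argument taken literally yields $O(n^{2r+3})$, not the claimed $O(n^{2r+1})$. You spot this and supply the missing ingredient: replace the symmetric $Q'$ of line~\ref{alg:hiding} by the non-symmetric representative $\tilde Q=\left(\begin{smallmatrix}Q&\linclen\\0&0\end{smallmatrix}\right)$ of rank at most $r+1$ before the factorization on line~\ref{alg:reducing}, and then rerun the $\linclen=0$ bound with $r$ replaced by $r+1$. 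That is a genuine fix, in the spirit of Section~\ref{sLowRankAch} (though that section proves the general minimal-rank result $\max(p,q)$ rather than this specific block construction). One caveat: your fix amends the algorithm as literally written, since line~\ref{alg:hiding} assembles the symmetric matrix; so strictly speaking you are proving the bound for a slightly adjusted algorithm---but this is fair, as the paper's own pseudocode and proof are already loose on the same point.
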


\begin{proof}
Let $\linclen=0$. In step \ref{alg:enumerating},
 $O(n^{2r-1})$ faces of the zonotope are enumerated. This takes time of same order, i.e.\ $O(n^{2r-1})$. For every face, linear program of the form \cref{eq:stationary:point} is solved. The program has at most $n+2r$ variables and at most $2n+2r$ constraints. The bit-size of the data of the LP depends on computations performed in step~\ref{alg:hiding} (negligible), step~\ref{alg:reducing} (rank factorization, polynomially solvable via adaptation of Gaussian elimination) and step~\ref{alg:handling:basis} (finding orthogonal subspace, polynomially solvable). Since all steps are performed in polynomial time, the bit-size of data of the LP \cref{eq:stationary:point} is bounded by a polymonial in the bit-sizes of $Q,\ux,\ox$.

If $\linclen\not=0$, then the time complexity follows from the construction in step~\ref{alg:hiding} and from the previous case.
\end{proof}

\begin{theorem}\label{the:correctness}
The method is correct and solves \cref{eq:basic}.
\end{theorem}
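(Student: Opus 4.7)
The plan is to lift the proof to the reduced problem \cref{eq:reduced}, where the observations \cref{obsOptStat} and \cref{obsStatConst} apply with the zonotope $\Z(G,C)$ in place of $C$, and then to pull the optimum back to \cref{eq:basic}. First I would verify that the two preprocessing reductions are exact: hiding the linear term (\cref{sec:hiding}) is justified because $\ux'_{n+1}=\ox'_{n+1}=1$ forces $w=1$, giving an objective-preserving bijection between feasible sets; the dimension reduction (\cref{sec:reducing}) is justified by the chain \cref{eq:projection:hypercube}--\cref{eq:reduced}, which also shows that every maximizer $y^*$ of $y\T W y$ on $\Z(G,C)$ has a preimage $x^* \in C$ with $Gx^* = y^*$ of the same objective value.

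Next, the two observations transfer to \cref{eq:reduced}: \cref{obsOptStat} holds verbatim because $\Z(G,C)$ is a compact polytope and the KKT argument uses only polyhedrality and differentiability, while \cref{obsStatConst} holds because $y\T W y$ is again quadratic so its gradient is affine. By the zonotope--arrangement duality recalled in \cref{sec:enumerating}, the faces of $\Z(G,C)$ are in bijection with regions of $\A(G)$, hence with the enumerated sign vectors $\sigma \in S$ produced in step~\ref{alg:enumerating}; explicitly, the face corresponding to $\sigma$ is $\Z(G, C_\sigma)$, and each face of $\Z(G,C)$ occurs exactly once.

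Then I would check the LP step. The derivation of \cref{eq:stationary:point} in \cref{sec:handling} shows that feasible solutions $(\lambda^*, x_A^*)$ of the LP correspond to points $x^* = \binom{x_A^*}{x_b} \in C_\sigma$ whose images $y^* = G x^*$ satisfy the Lagrange system on $\ah(\Z(G, C_\sigma))$; the LP is therefore feasible precisely when $\Z(G, C_\sigma)$ carries a stationary point of $y\T W y$, and in that case $x^*$ is a stationary point of $f$ whose objective value equals that of $y^*$. Combining everything, the global maximum of \cref{eq:reduced} is attained at some $y^*$ lying in the relative interior of $\Z(G, C_{\sigma^*})$ for some $\sigma^* \in S$; by \cref{obsOptStat} the LP for $\sigma^*$ is feasible, by \cref{obsStatConst} every solution of it evaluates to the global optimum, and steps~\ref{alg:handling:lp}--\ref{alg:handling:update} record it.

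The main obstacle will be the clean verification that \cref{obsOptStat} and \cref{obsStatConst} transfer from the box $C$ to the zonotope $\Z(G,C)$, together with the precise correspondence between feasible solutions of the LP~\cref{eq:stationary:point} and stationary points of $y\T W y$ on $\Z(G, C_\sigma)$; the implicit ``$C,G$''-representation of $\Z(G,C)$ (as opposed to a halfspace description) is what makes this dictionary nontrivial. Once the dictionary is set up, correctness of \cref{alg:algorithm} follows directly from the two observations and the completeness of the enumeration in step~\ref{alg:enumerating}.
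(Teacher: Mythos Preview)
Your plan is correct and matches the paper's own argument: work in the reduced problem \cref{eq:reduced}, pick the minimal face of $\Z(G,C)$ containing a maximizer $y^*$, observe that the corresponding LP~\cref{eq:stationary:point} is feasible by the first-order Lagrange conditions, and invoke \cref{obsStatConst} to conclude that any point the LP solver returns has the same objective value as $y^*$. The paper's proof is terser than your outline---it takes the preprocessing reductions and the face enumeration for granted and goes straight to the Lagrange system---but the skeleton is the same.
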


\begin{proof}
    Since $Z(G,C)$ is a compact set and $y\T W y$ defined in \cref{eq:reduced} is a continuous function, a maximizer always exists. 

    Now consider a~maximizer $y^*$. Let $\sigma$ be a sign vector of the minimal face 
$F$ of $Z(G,C)$ containing $y^*$. 
Let $Ay = Ab$ be the affine hull of $F$ as constructed in step~\ref{alg:handling:basis}. Since $y^*$ is a maximizer, it satisfies the first-order Lagrange conditions $A\T\lambda = 2Wy$, $Ay = Ab$.
And, since $y^* \in F$, it also satisfies the last condition in 
\cref{eq:stationary:initial}. System \cref{eq:stationary:initial} is tested (in form \cref{eq:stationary:point}) in step~\ref{alg:handling:lp} in the iteration when $\sigma$ is chosen from $S$.

What can happen is that the linear programming solver for \cref{eq:stationary:point} finds
a point $x^{**}$ such that $Gx^{**} \eqqcolon y^{**} \neq y^*$. But this does not matter since 
\begin{equation}
    (y^*)\T W y^* = (y^{**})\T W y^{**}
\label{eq:rovnost}
\end{equation}
in view of Observation~\ref{obsStatConst}.
\end{proof}

\section{Minimal rank achievement}\label{sLowRankAch}
As we saw in previous sections, the smaller rank of $Q$ the better. Here we focus on obtaining a matrix with minimal rank that carries the same information as $Q$. We start with some illustrative examples and preliminary observations.

Requiring $Q$ to be symmetric need not be the best choice. The symmetrization $Q\mapsto \frac{1}{2}(Q+Q\T)$ may both increase or decrease the rank of the resulting matrix. Obviously, the rank cannot increase more than twice, so we have the upper bound
$$
\rank(Q+Q\T)\leq 2\rank(Q).
$$
The bound is tight, e.g., for the block diagonal matrix with uniform blocks in the form
$$
\begin{pmatrix}
0 & 1\\0 & 0
\end{pmatrix}.
$$
On the other hand, rank can decrease arbitrarily. Consider the block diagonal matrix $A$ with blocks in the form
$$
\begin{pmatrix}
0 & 1\\-1 & 0
\end{pmatrix}.
$$
Then $Q$ is nonsingular, but $Q+Q\T$ has zero rank. Even upper triangular matrices have similar behaviour. The symmetric counterpart of the nonsingular matrix
$$
Q=\begin{pmatrix}
1 & 2 & \dots & 2\\
0 & 1 & \ddots & \vdots\\
\vdots & \ddots & \ddots & 2\\
0 & \dots & 0 & 1
\end{pmatrix}.
$$
is the all-ones matrix of rank one.

Now, we show what is the smallest achievable rank. The proof is constructive, so it can be used for a practical construction of the corresponding matrix.

\begin{theorem}
Let $(p,q,s)$ be the signature of the quadratic form $x\T Qx$. Then the smallest achievable rank of its matrix is $\max(p,q)$.
\end{theorem}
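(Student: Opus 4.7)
The plan is to prove the claim by separately establishing the lower bound $\rank(Q') \ge \max(p,q)$ for any matrix $Q'$ representing the same quadratic form, and then producing an explicit $Q'$ meeting that bound. Throughout, I will use the fact that for any matrix $Q'$ with $x\T Q' x = x\T Q x$ for all $x$, we have the identity $x\T Q' x = \frac{1}{2} x\T (Q'+Q'\T)x = x\T S x$, where $S = \frac{1}{2}(Q+Q\T)$ is the unique symmetric matrix representing the form; in particular $S$ has signature $(p,q,s)$.

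For the lower bound I would argue by contradiction via a dimension count. By Sylvester's law of inertia there is a $p$-dimensional subspace $V_+$ on which $S$ is positive definite, and a $q$-dimensional subspace $V_-$ on which $S$ is negative definite. Suppose that $\rank(Q') < p$. Then $\dim \ker(Q') > n-p$, so $\ker(Q')\cap V_+$ contains a nonzero vector $x$. On one hand $Q'x=0$ gives $x\T Q' x = 0$; on the other hand $x \in V_+ \setminus\{0\}$ gives $x\T S x > 0$. This contradicts $x\T Q' x = x\T S x$, so $\rank(Q')\ge p$. The identical argument with $V_-$ yields $\rank(Q')\ge q$, hence $\rank(Q')\ge \max(p,q)$.

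For the upper bound I would give a constructive rank factorization. Using Sylvester's law, write $S = A\T A - B\T B$ with $A\in\RR^{p\times n}$ and $B\in\RR^{q\times n}$ of full row rank; concretely, diagonalize $S$ by a congruence and read off the rows of $A$ and $B$ from the positive and negative eigen-directions. Assume without loss of generality $p\ge q$ and pad $B$ with $p-q$ zero rows so that both $A$ and $B$ lie in $\RR^{p\times n}$. Define
\begin{equation*}
Q' \coloneqq (A+B)\T(A-B).
\end{equation*}
By construction $\rank(Q')\le p = \max(p,q)$. Expanding, $Q' = A\T A - B\T B + B\T A - A\T B$, and the two cross terms are transposes of each other, so they contribute $x\T(B\T A - A\T B)x = 0$. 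Hence $x\T Q'x = x\T A\T A x - x\T B\T B x = x\T S x$, and $Q'$ represents the same quadratic form.

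The only delicate point, and the one I would think through most carefully, is the lower bound: one might be tempted to argue via $\rank(Q'+Q'\T)\le 2\rank(Q')$, which only gives $\rank(Q')\ge\lceil(p+q)/2\rceil$ and is not tight. The sharper bound $\max(p,q)$ really requires the dimension-count argument using the positive/negative subspaces together with the identity $x\T Q' x = x\T S x$, which translates the algebraic constraint "$Q'$ has the same symmetric part as $S$" into the geometric constraint needed to prevent $\ker(Q')$ from meeting $V_+$ or $V_-$.
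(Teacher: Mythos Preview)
Your proof is correct and follows essentially the same approach as the paper. The lower bound via intersecting $\ker(Q')$ with a positive-definite subspace $V_+$ is exactly the paper's dimension-count argument, stated a bit more carefully. For the upper bound the paper works in a polar basis and replaces each $2\times2$ block $\bigl(\begin{smallmatrix}1&0\\0&-1\end{smallmatrix}\bigr)$ by the rank-one block $\bigl(\begin{smallmatrix}1&1\\-1&-1\end{smallmatrix}\bigr)$; your construction $Q'=(A+B)\T(A-B)$ is the coordinate-free version of the same pairing trick, and indeed specializes to (the transpose of) those same $2\times2$ blocks when $A,B$ are taken to be the obvious coordinate projections in a polar basis. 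A minor bonus of your formulation is that it directly yields the rank factorization $Q'=U\T V$ with $U=A+B$, $V=A-B$, which is precisely the form needed in the dimension-reduction step of the algorithm.
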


\begin{proof}
Without loss of generality assume that $p\geq q$ and $Q$ is symmetric. 
First, we show that the rank of $p$ is achievable. 
In a polar basis, the matrix of the quadratic form is diagonal with diagonal entries
$$
1,-1,\dots,1,-1,1,\dots,1,0,\dots,0.
$$
We replace each diagonal block
$$
\begin{pmatrix}
1 & 0\\0 & -1
\end{pmatrix}
$$
of rank 2 by the block
$$
\begin{pmatrix}
1 & 1\\-1 & -1
\end{pmatrix}
$$
of rank 1. Thus, we obtain a matrix of the overall rank~$p$. The matrix corresponds to the polar basis, but by the transformation to the canonical basis we get the required matrix.

Now, we show that there is no matrix of the quadratic form with smaller rank. Suppose to the contrary that there is some of rank $r<p$. Then the quadratic form vanishes on a subspace of dimension $n-r$. However, the quadratic form is positive definite on a subspace of dimension $p$. Therefore, there is a nontrivial subspace, where the form is both zero and positive definite; a contradiction.
\end{proof}

\section{Concluding remarks}

The proposed method can solve arbitrary rank-deficient quadratic maximization in polynomial time, extending previous result by \cite{allemand:2001:polynomialcaseunconstrained}, which worked only in the psd case with no linear term. As a direct consequence, the method can also be used in binary psd case.


The enumeration method from Section 2.3 requires the entire face lattice to be stored in memory. Thus, the method can be considered as ``memory intensive'', although still polynomial as far as the rank is low.
Recall that the enumeration method from \cite{allemand:2001:polynomialcaseunconstrained} prints the enumeration as a stream without the necessity to remember the history; this property reduces the space complexity significantly. Unfortunately, we currently cannot achieve an analogous property for~\cref{eq:basic}.
Since all faces of the feasible region have to be stored in memory during the enumeration, the algorithm is very memory intensive (although still polynomial as long as the
rank of the quadratic form is fixed).
Finding a memory less expensive algorithm thus remains as a challenging open problem.

Since the complexity of our method depends on the rank of the underlying quadratic form, it might be useful to represent it by a nonsymmetric matrix. In \cref{sLowRankAch}, we showed a method how to achieve the smallest possible rank. 

\bibliographystyle{spmpsci}      

\bibliography{quadratic_fixed_poly}


\end{document}